\newtheorem{theorem}{Theorem}
\newtheorem*{thm}{Theorem}
\newtheorem*{lemma}{Lemma}
\begin{document}

\title[Improved Bounds for Random TSP]{New Bounds for the Traveling Salesman Constant}
\author{Stefan Steinerberger}
\address{Mathematisches Institut, Universit\"at Bonn, Endenicher Allee 60, 53115 Bonn, Germany}
\email{steinerb@math.uni-bonn.de}
\keywords{Traveling Salesman Constant, Beardwood-Halton-Hammersley Theorem}
\subjclass[2010]{60D05, 60F17}

\begin{abstract} 
Let $X_1, X_2, \dots, X_n$  be independent and uniformly distributed random variables in the
unit square $[0,1]^2$ and let $L(X_1, \dots, X_n)$
be the length of the shortest traveling salesman path through these points. In 1959, Beardwood, Halton \& Hammersley proved the existence of a universal constant $\beta$ such that
$$ \lim_{n \rightarrow \infty}{n^{-1/2}L(X_1, \dots, X_n)} = \beta \qquad \mbox{almost surely.}$$
The best bounds for $\beta$ are still the ones originally established by Beardwood, Halton \& Hammersley $0.625 \leq \beta \leq 0.922$.  We slightly improve both upper and lower bounds.
\end{abstract}
\maketitle

\section{Introduction and statement of results}
For given points $x_1, x_2, \dots, x_n \subset [0,1]^2$, let $L(x_1, \dots, x_n)$ denote 
the length of the shortest traveling salesman path through all this points. It was realized
early (e.g. Fejes \cite{fejes} in 1940, Verblunsky \cite{verb} in 1951 and
Few \cite{few} in 1955) that there are uniform estimates
$$ L(x_1, \dots, x_n) \leq c_1\sqrt{n}+c_2$$
for some constants $c_1, c_2$.
If the points are chosen at random, one would expect a universality phenomenon: finding
the optimal path is in some sense 'equivalent' to finding the optimal path through the points in
many small subset of the unit square and then patching these together: the problem is
self-similar on a smaller scale and this should imply an averaging effect. That this is indeed the case  constitutes one of the first limit theorems in combinatorial optimization \cite{beard}.

\begin{thm}[Beardwood, Halton \& Hammersley, 1959] Let $X_1, X_2, \dots, X_n, \dots$  be i.i.d. uniformly distributed random variables in $[0,1]^2$. There exists a universal constant $\beta$ such that
$$ \lim_{n \rightarrow \infty}{\frac{L(X_1, \dots, X_n)}{\sqrt{n}}} = \beta$$
with probability 1.
\end{thm}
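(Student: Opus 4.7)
The strategy is two-tiered. First I would show that $\mathbb{E}[L(X_1,\dots,X_n)]/\sqrt{n}$ converges to a limit $\beta$ via a near-subadditivity argument obtained by partitioning the unit square, and then upgrade this to almost-sure convergence via a variance estimate and Borel--Cantelli.

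For the first step, partition $[0,1]^2$ into $m^2$ axis-aligned subsquares of side $1/m$ and let $N_i$ be the number of points in the $i$-th subsquare, so $N_i \sim \mathrm{Binomial}(n, m^{-2})$. Conditioning on $(N_1,\dots,N_{m^2})$ and exploiting the scaling identity $L(sy_1,\dots,sy_k) = s \, L(y_1,\dots,y_k)$, the conditional expected optimum inside the $i$-th subsquare equals $m^{-1}\,\mathbb{E} L(N_i)$. An upper bound on the global optimum comes from concatenating the $m^2$ subsquare tours via a snake-like routing of total cost $O(m)$; a matching lower bound comes from restricting the optimal global tour to each subsquare, paying only at grid-line crossings, of which there are $O(\sqrt n + m)$ by the deterministic bound $L \le c_1 \sqrt n + c_2$ cited in the introduction. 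Taking expectations,
\[ \bigl| \mathbb{E} L(n) - m\, \mathbb{E} L(\lfloor n/m^2 \rfloor) \bigr| \le C_1 m + C_2 \sqrt n / m . \]
Since $\mathbb{E} L(n)/\sqrt n$ is bounded, a Fekete-type iteration (taking $n = km^2$ and sending $k,m\to\infty$) makes $\mathbb{E} L(n)/\sqrt n$ Cauchy, so it converges to some $\beta \ge 0$.

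For the second step, $L$ changes by at most $2\sqrt 2$ under replacement of a single point, since the optimal tour can be locally detoured through the new point. An Efron--Stein bound yields only $\mathrm{Var}\, L(n) = O(n)$, which is not sufficient for Borel--Cantelli; however, finer martingale or boundary-functional arguments sharpen this to $\mathrm{Var}\, L(n) = o(n)$. Chebyshev along the subsequence $n_k = k^2$ together with Borel--Cantelli then give $L(n_k)/\sqrt{n_k} \to \beta$ almost surely, and monotonicity of $L$ in the number of points combined with the sandwich $L(n_k)/\sqrt{n_{k+1}} \le L(n)/\sqrt n \le L(n_{k+1})/\sqrt{n_k}$ for $n_k \le n < n_{k+1}$ extends this to all $n$. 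The main obstacle is the lower bound in Step~1: truncating the optimal global tour at grid lines can destroy length along the boundary, and controlling this cost requires the $O(\sqrt n + m)$ crossings bound together with a balanced choice such as $m \asymp n^{1/4}$, which makes both error terms $o(\sqrt n)$. Once this partitioning argument is handled cleanly, the remainder is a standard subadditivity-plus-concentration exercise.
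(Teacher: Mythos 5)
The paper does not actually prove this theorem: it is the classical Beardwood--Halton--Hammersley result, quoted as background with a citation to \cite{beard} and used as a black box throughout. So your proposal can only be judged on its own terms. The architecture you describe --- near-subadditivity of the mean over a grid of $m^2$ cells, then concentration plus interpolation along $n_k=k^2$ using monotonicity of $L$ --- is the standard modern route (Steele, Yukich) and is sound in outline, but two steps fail as written. The more serious one is the superadditive half of Step~1: you propose to restrict the optimal global tour to each cell and ``pay only at grid-line crossings, of which there are $O(\sqrt n+m)$ by the deterministic bound $L\le c_1\sqrt n+c_2$.'' No bound on the number of crossings follows from a bound on the length: a tour of total length $O(\sqrt n)$ can cross a single grid line $\Theta(n)$ times via arbitrarily short zigzags, and the only a priori crossing count is $O(n+m\sqrt n)$. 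Since reconnecting the tour fragments costs $\Theta(1/m)$ per crossing, your accounting produces an error of order $n/m$, which is \emph{not} $o(\sqrt n)$ for $m\asymp n^{1/4}$. The standard repair avoids counting crossings entirely: within each cell the restriction of the global tour is a union of paths whose endpoints lie on the cell boundary; adjoin a traversal of the cell's perimeter (length $4/m$) to link these endpoints and shortcut to a closed tour, at extra cost $O(1/m)$ per cell and hence $O(m)$ in total, however many crossings there are.

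The second gap is in the concentration step. A bound $\operatorname{Var}L(n)=o(n)$ gives $\mathbb{P}\bigl(|L(n_k)-\mathbb{E}L(n_k)|>\varepsilon\sqrt{n_k}\bigr)=o(1)$ along $n_k=k^2$, but $o(1)$ terms need not be summable, so Borel--Cantelli does not apply; likewise Azuma with the bounded-difference constant $2\sqrt2$ only concentrates $L$ at scale $\sqrt n$, which is the scale of the mean itself. You need a quantitative, summable estimate: for instance Steele's martingale bound $\operatorname{Var}L(n)=O(\log n)$ (the $i$-th martingale difference is $O((n-i)^{-1/2})$ because a point's marginal effect is comparable to its nearest-neighbour distance among the points yet to come), or the Rhee--Talagrand bound $\operatorname{Var}L(n)=O(1)$; either makes $\sum_k \operatorname{Var}L(k^2)/k^2<\infty$ and closes the argument. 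With these two repairs, plus some routine care in de-Poissonizing the binomial cell counts $N_i$ in Step~1, your outline becomes the standard proof.
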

The statement is by now classic and very well-known (see, for example, the textbooks of
Applegate, Bixby, Chvatal \& Cook \cite{apple}, Finch \cite{finch}, Gutin \& Punnen \cite{gut}, Steele \cite{steeletext}
or Venkatesh \cite{venk} or even a popular-science book \cite{cook}). It is relatively easy to deduce
that if the points $X_i$ are random following an absolutely continuous probability distribution $f(x)$ on $\mathbb{R}^2$, then
$$ \lim_{n \rightarrow \infty}{\frac{L(X_1, \dots, X_n)}{\sqrt{n}}} = \beta\int_{[0,1]^2}{f(x)^{
\frac{1}{2}}dx}.$$
The Beardwood-Halton-Hammersley limit law is true for various other problems 
(e.g. minimal spanning tree, Steiner trees, ...) with a constant depending on the functional: a unified approach to the theory is given by Steele's limit theorem \cite{steele}. Interestingly and despite considerable effort, the constant is not known
in any of the aforementioned cases. In case of the traveling salesman, Beardwood, Halton \& Hammersley themselves proved that
$$ 0.625 = \frac{5}{8} \leq \beta \leq \beta_{BHH} \sim 0.92116\dots,$$
where 
$$\beta_{BHH} = 2\int_{0}^{\infty}{\int_{0}^{\sqrt{3}}{\sqrt{z_1^2+z_2^2}\exp{\left(-\sqrt{3}z_1\right)}
\left(1-\frac{z_2}{\sqrt{3}}\right)dz_2} dz_1}.$$
It should be noted that Beardwood, Halton \& Hammersley actually claim to prove the better result $\beta \leq 0.92037 \dots$ (a
statement reiterated in many different books and papers), however, their computation relies on numerical
integration and we believe this to be the origin of the error: for the convenience of the
reader, we have quickly surveyed their argument (and the integral to be evaluated) below. Despite the relative fame of the Beardwood-Halton-Hammersley theorem, there has been no improvement in the constant over
the years; a series of papers \cite{johnson, percus, val} carrying out numerical estimates with large data sets suggest $\beta \sim 0.712$.
The purpose of this paper is to draw some attention to the problem, describe the
existing original arguments and to improve them.
\begin{theorem} We have
$$  \frac{5}{8} + \frac{19}{5184} \leq \beta  \leq \beta_{BHH} - \varepsilon_0$$
for some explicit 
$$\varepsilon_0 > \frac{9}{16}10^{-6}.$$
\end{theorem}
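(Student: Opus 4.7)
\emph{Proof plan.} The proof splits naturally into two largely independent parts, one for each inequality; both refinements identify explicit geometric configurations in which the original Beardwood--Halton--Hammersley arguments are strictly loose.

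\textbf{Lower bound.} The classical argument rests on the pointwise inequality
$$2\, L(X_1,\dots,X_n) \;\geq\; \sum_{i=1}^{n}\bigl(R_1(X_i) + R_2(X_i)\bigr) - O(\sqrt{n}),$$
where $R_k(X_i)$ denotes the distance from $X_i$ to its $k$-th nearest neighbor and the error accounts for the two endpoints of the tour and for the boundary of $[0,1]^2$; taking expectations and using the Poisson values $\mathbb{E}\,R_1 = 1/2$ and $\mathbb{E}\,R_2 = 3/4$ (after rescaling by $\sqrt{n}$) gives $\beta\geq 5/8$. Crucially, the inequality $|X_iY|+|X_iZ|\geq R_1(X_i)+R_2(X_i)$ is tight only when the two tour-neighbors $Y,Z$ of $X_i$ happen to be its two nearest neighbors. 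The plan is to quantify the slack by isolating a positive-probability local event --- most naturally, the presence of an unusually close third point within a small disk around $X_i$, in which case the tour cannot simultaneously connect $X_i$ to its two nearest points and reach the third without adding a measurable excess somewhere --- and to integrate the expected excess over the Poisson model. A careful case analysis, tracking which of the neighbors of $X_i$ coincide with its tour-neighbors, should yield the explicit gain $19/5184$.

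\textbf{Upper bound.} The BHH upper bound comes from the strip heuristic: $[0,1]^2$ is divided into horizontal strips of asymptotically optimal width, and the points in each strip are visited in $x$-order with alternating directions across strips. The integrand defining $\beta_{BHH}$ is the expected length of a single such edge in the Poisson limit, with the factor $\sqrt{3}$ reflecting the optimal strip width. The approach is to exhibit a \emph{specific} local modification of the strip tour --- a $2$-opt move activated when three or four consecutive points (possibly in adjacent strips) lie in a configuration forming an avoidable crossing --- which strictly shortens the tour whenever it applies. The probability $p_0$ of the triggering configuration and the expected saving $s_0$ it produces are both positive constants independent of $n$, giving a saving $\varepsilon_0 \geq p_0 s_0$ that survives the passage to the limit. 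Conservative explicit estimates of $p_0$ and $s_0$ should yield the claimed bound $\varepsilon_0 > (9/16)\cdot 10^{-6}$.

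\textbf{Main obstacle.} The principal difficulty is that the ``improvement events'' must simultaneously be simple enough to be integrated exactly and structured enough to give a saving that survives the averaging over all $n$ points. In the lower bound one must control boundary effects and the discrepancy between the uniform and Poisson models --- precisely the sort of subtlety which appears to have caused the numerical error of Beardwood, Halton \& Hammersley in their own upper-bound integration. In the upper bound one must verify that the local $2$-opt modification meshes with the subadditivity framework used to extract the limiting constant, which restricts the class of admissible modifications. Since the numerical improvements are very small (consistent with the conjectural $\beta\approx 0.712$ still being far away), explicitness and correctness, rather than sophistication, will be the binding constraints throughout.
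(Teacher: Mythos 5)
Your upper-bound plan is essentially the paper's argument: the paper modifies the strip tour by grouping the points of each strip into disjoint $4$-tuples of consecutive points and, whenever the reordering $p_1,p_3,p_2,p_4$ is shorter than $p_1,p_2,p_3,p_4$ (a ``zigzag''), performing that swap; an explicit sub-event (all three $x$-increments at most $\sqrt{3}/9$, with $y_1,y_3$ in the top ninth and $y_2,y_4$ in the bottom ninth of the strip) has probability at least $3\cdot 10^{-6}$ and guarantees a saving of at least $3/4$ in rescaled units, which yields $\varepsilon_0=\tfrac{9}{16}10^{-6}$ after multiplying by the $n/4-O(\sqrt n)$ independent tuples. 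Your worry about subadditivity is unnecessary: since $\beta$ is already known to be the a.s.\ limit, the expected length of \emph{any} explicit construction divided by $\sqrt n$ is an upper bound in the limit, so no compatibility with the subadditive machinery is needed. What you would still have to supply is the explicit certifying event and the verification that the saving on it is bounded below by a constant, but that is a computation, not a missing idea.

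The lower bound is where there is a genuine gap. Your triggering event --- ``an unusually close third point'' near $X_i$ --- does not produce any slack in the inequality $2L\geq\sum_i\bigl(R_1(X_i)+R_2(X_i)\bigr)$: that inequality only asserts that the two tour edges at $X_i$ are at least as long as the distances to the two nearest neighbours, and the presence of a nearby third point is perfectly consistent with tightness (the third point simply gets connected elsewhere). The paper's key idea is the opposite configuration: a point $a$ whose third-nearest neighbour is unusually \emph{far}, namely $r_3> r_1+2r_2$. A triangle-inequality argument then shows that for each of $a,b,c$ the two nearest neighbours again lie in $\{a,b,c\}$, so the greedy accounting would close a $3$-cycle --- impossible inside a Hamiltonian path --- and the tour must spend at least $2r_3$ to enter and leave this isolated triple, exceeding the greedy count of at most $3(r_1+r_2)$ by $2r_3-3(r_1+r_2)\geq r_2-r_1\geq 0$. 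Integrating this excess against the joint density $h(r_1,r_2,r_3)=e^{-n\pi r_3^2}(2n\pi)^3 r_1r_2r_3$ over the region $r_3>r_1+2r_2$ gives probability $7/324$ per point (divided by $3$ for overcounting the triple) and conditional expected gain $\tfrac{57}{112}n^{-1/2}$, whence the correction $\tfrac{7}{972}\cdot\tfrac{57}{112}=\tfrac{19}{5184}$. Without the global obstruction (the forbidden $3$-cycle) there is no reason any local density of points forces extra length, so your proposed case analysis would not converge to the stated constant.
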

We have an explicit representation of $\varepsilon_0$ as an integral in $\mathbb{R}^7$: a concentration of measure effect turns Monte-Carlo estimates into a highly stable method and suggests that actually
$$\varepsilon_0 \sim 0.0148\dots,$$
however, we consider the underlying idea to be of greater interest than the actual numerical improvement -- in addition,
certain natural generalizations of our method should be able to give at least $\beta \leq 0.891$ if one assumes that certain integrals in high dimensions can be evaluated (details are given below). While additional improvements of the upper bound may lead to integrals
whose evaluations become nontrivial, the approach is conceptually clear: further improving the lower bound, however, seems more challenging and in need of new ideas. As of this moment, we know of no methodical approach how this could be accomplished.

\section{Proof of the Upper Bound}

\subsection{Reduction to Poisson processes.} The core of the proof is in the stochastic treatment of $n$ random
points in $[0,1]^2$ locally on the scale $n^{-1/2}$. At this scale the law of small numbers (see e.g. \cite{sv}) implies that
the process behaves essentially like a Poisson process with intensity $n$. This property was exploited by Beardwood, Halton \& Hammersley; using their result, we can replace the $n$ random points with a Poisson process with intensity $n$, which simplifies further computations (this argument was pointed out to me by J. Michael Steele). 

\begin{lemma} Let $\mathcal{P}_n$ denote a Poisson process with intensity $n$ on $[0,1]^2$. Then
$$ \lim_{n \rightarrow \infty}{\frac{\mathbb{E} L(\mathcal{P}_{n})}{\sqrt{n}}} = \beta.$$
\end{lemma}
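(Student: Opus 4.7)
The plan is to de-Poissonize: exploit the fact that a Poisson process of intensity $n$ on $[0,1]^2$ has the same distribution as a collection $(X_1,\dots,X_N)$ of i.i.d.\ uniform points, where $N$ is a $\mathrm{Poisson}(n)$ random variable independent of the $X_i$. Conditioning on $N$ turns the assertion into a question about the behaviour of the deterministic sequence
\[
f(m) := \frac{\mathbb{E}\, L(X_1,\dots,X_m)}{\sqrt{m}} \qquad (f(0):=0),
\]
evaluated at a random argument, since
\[
\frac{\mathbb{E}\, L(\mathcal{P}_n)}{\sqrt{n}} \;=\; \mathbb{E}\!\left[ f(N)\,\sqrt{N/n}\,\right].
\]

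First I would establish the two basic facts about $f$. Boundedness is immediate from the uniform deterministic estimate $L(x_1,\dots,x_m) \leq c_1\sqrt{m}+c_2$ mentioned in the introduction, which gives $|f(m)| \leq c_1+c_2$ for all $m\geq 1$. To see that $f(m)\to \beta$, combine the almost sure limit of Beardwood--Halton--Hammersley with the same uniform bound and the bounded convergence theorem: the ratio $L(X_1,\dots,X_m)/\sqrt{m}$ is bounded by a constant, so a.s.\ convergence upgrades to convergence in $L^1$, hence $f(m)\to \beta$.

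Second, I would control the fluctuations of the random $N$. Since $\mathbb{E}[N/n]=1$ and $\mathrm{Var}(N/n)=1/n$, we have $N/n \to 1$ in $L^2$, and the elementary inequality $|\sqrt{x}-1| \leq |x-1|$ for $x\geq 0$ implies $\sqrt{N/n}\to 1$ in $L^2$. Given $\varepsilon>0$, choose $M$ with $|f(m)-\beta|<\varepsilon$ for all $m\geq M$ and split
\[
\mathbb{E}\!\left[ f(N)\sqrt{N/n}\,\right] \;=\; \mathbb{E}\!\left[ f(N)\sqrt{N/n}\,\mathbf{1}_{N\geq M}\right] + \mathbb{E}\!\left[ f(N)\sqrt{N/n}\,\mathbf{1}_{N< M}\right].
\]
The second term is $O(\sqrt{M/n})=o(1)$ by boundedness of $f$. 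For the first, write $f(N)=\beta + (f(N)-\beta)$; Cauchy--Schwarz gives $\mathbb{E}\sqrt{N/n}\leq 1$, so the error introduced by replacing $f(N)$ by $\beta$ is at most $\varepsilon$, while $\beta\,\mathbb{E}[\sqrt{N/n}\,\mathbf{1}_{N\geq M}] \to \beta$. Sending $\varepsilon\to 0$ yields $\mathbb{E} L(\mathcal{P}_n)/\sqrt{n}\to \beta$.

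There is no real obstacle; the only subtle point is that Beardwood--Halton--Hammersley gives only almost sure convergence, so one must pass to convergence in mean (used both to define $f$ and to justify the $L^2$ manipulations above), and this is exactly what the deterministic worst-case bound $L\leq c_1\sqrt{m}+c_2$ is for. Everything else is standard de-Poissonization bookkeeping.
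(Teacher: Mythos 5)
Your de-Poissonization argument is correct, and it is exactly the rigorous version of the approach the paper sketches (conditioning on the Poisson number of points, using concentration of $N$ around $n$ together with the almost sure Beardwood--Halton--Hammersley limit for typical $N$, and invoking the uniform bound $L\leq c_1\sqrt{m}+c_2$ to handle atypical $N$ and to upgrade a.s.\ convergence to convergence in mean). The paper leaves these details to the reader; your write-up supplies them faithfully.
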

The idea is rather simple: the number of points in a Poisson process (i.e. the Poisson
distribution) has mean $n$ and variance $n$. This means that we usually expect $\left|\#\mathcal{P}_{n} - n\right|
\sim \sqrt{n}$, which is rather small compared to $n$. The expected length of a traveling
salesman path lies somewhere between $\sim \beta\sqrt{n-\sqrt{n}}$ and $\sim \beta\sqrt{n+\sqrt{n}}$, the difference
of which is $\sim 1$ and thus of smaller order -- if we now assume the Beardwood, Halton \& Hammersley result,
this implies convergence for all cases concentrated here. Additionally, to deal with the other cases, it suffices to show that is very unlikely to have an unusually large amount of points and that the uniform bound of Few suffices. We leave the details to the interested reader and remark that the inverse statement (i.e. that the result
for the Poisson process implies the desired result is actually due to Beardwood, Halton \& Hammersley).

\subsection{The original argument.}
This section describes the original argument due to Beardwood-Halton-Hammersley. It should
be noted that a very similar argument was already used few years earlier by Few \cite{few}. Let $X$
be a Poisson process with intensity $n$ in the unit square $[0,1]^2$. We look at the set
$$ X^* = \left\{x \in X: \pi_2(x) \leq \frac{\sqrt{3}}{\sqrt{n}}\right\}, $$
where $\pi_2$ is the projection onto the second component. 
Instead of asking for a traveling
salesman tour through all the points, we merely ask for one through this particular strip. The
entire unit square is cut into stripes and within each strip a local path gets constructed: in the
end they all get connected to yield a fully valid traveling salesman path.
 The simplest solution locally within a strip is to order the points in $X^*$ with respect to the first coordinate, i.e. order
them in such a way that
$$ \pi_1(x_1) < \pi_1(x_2) < \pi_1(x_3) < \dots $$
and then simply connected the points in that order.\\

\begin{figure}[h!]
\begin{tikzpicture}[xscale = 1, yscale =1]
\draw [thick] (0,0) --(10,0);
\draw [thick](0,1.5) --(10,1.5);
\draw [thick] (0,0) --(0,1.5);
\draw [thick] (10,0) --(10,1.5);
\draw [fill] (1,0.5) circle [radius=0.05];
\draw  (1,0.5) --(1.3,1.15);
\draw [fill] (1.3,1.15) circle [radius=0.05];
\draw  (2.5,0.15) --(1.3,1.15);
\draw [fill] (2.5,0.15) circle [radius=0.05];
\draw  (2.5,0.15) --(3,1.25);
\draw [fill] (3,1.25) circle [radius=0.05];
\draw  (3,1.25) --(3.3,0.75);
\draw [fill] (3.3,0.75) circle [radius=0.05];
\draw  (4.8,1.35) --(3.3,0.75);
\draw [fill] (4.8,1.35) circle [radius=0.05];
\draw  (4.8,1.35) --(5.5,1);
\draw [fill] (5.5,1) circle [radius=0.05];
\draw  (6.3,0.75) --(5.5,1);
\draw [fill] (6.3,0.75) circle [radius=0.05];
\draw  (6.3,0.75) --(7,0.15);
\draw [fill] (7,0.15) circle [radius=0.05];
\draw  (8,1) --(7,0.15);
\draw [fill] (8,1) circle [radius=0.05];
\end{tikzpicture}
\caption{A strip containing some points.}
\end{figure}
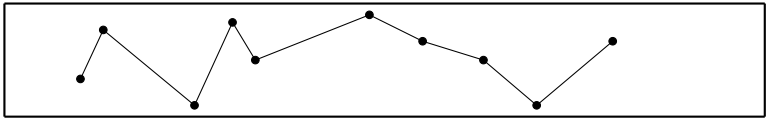

\begin{thm}[Beardwood, Halton \& Hammersley, 1959] Let $X$  be a Poisson process with intensity $n$ in $[0,1]^2$ and let $F$ be the length of the path constructed in the way described above. Then
$$ \lim_{n \rightarrow \infty}{\frac{\mathbb{E}F(X)}{\sqrt{n}}} = 0.92116\dots$$
\end{thm}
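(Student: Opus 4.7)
The plan is to partition $[0,1]^2$ into the $K = \lfloor\sqrt n/\sqrt 3\rfloor$ horizontal strips $S_j = [0,1]\times[(j-1)h,jh]$ of height $h=\sqrt 3/\sqrt n$, evaluate the expected path length within each strip via the Poisson structure, and argue that every other contribution is negligible. If the strips are traversed alternately left-to-right and right-to-left, each inter-strip link joins two points that both lie within an $O(1/\lambda)=O(1/\sqrt n)$ expected horizontal neighbourhood of the corresponding vertical edge and are at most $h$ apart vertically; hence each link has length $O(1/\sqrt n)$ and the total connection cost is $O(\sqrt n\cdot 1/\sqrt n)=O(1)$, which vanishes after dividing by $\sqrt n$. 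The incomplete fringe strip at the top of the square similarly contributes $O(1)$.

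Inside a single strip $S_j$, the key structural observation is that the projection of $X\cap S_j$ onto the first coordinate is a Poisson process on $[0,1]$ of rate $\lambda=nh=\sqrt{3n}$, while conditional on this projection the second coordinates are i.i.d.\ uniform on $[0,h]$. Writing the ordered first coordinates as $T_1<\cdots<T_N$ and the corresponding independent heights as $Y_1,\ldots,Y_N$, the within-strip length is
$$\ell_j \;=\; \sum_{i=1}^{N-1}\sqrt{(T_{i+1}-T_i)^2+(Y_{i+1}-Y_i)^2}.$$

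Applying the Palm (Slivnyak--Mecke) formula to the consecutive pair $(T_i,T_{i+1})$ --- or equivalently using that, by the memoryless property, the gap to the next arrival from a conditioned point is $\mathrm{Exp}(\lambda)$ --- gives
$$\mathbb{E}\,\ell_j \;=\; \lambda\int_0^1 \lambda e^{-\lambda s}(1-s)\,f(s)\,ds,\qquad f(s)\;=\;\mathbb{E}\sqrt{s^2+(Y'-Y)^2}$$
with $Y,Y'$ i.i.d.\ uniform on $[0,h]$; the $(1-s)$ factor records that both endpoints of a consecutive pair must lie in $[0,1]$. Since $\lambda\to\infty$, the exponential weight concentrates on $s=O(1/\lambda)$, so the factor $(1-s)$ contributes only a $1+O(1/\sqrt n)$ multiplicative correction and can be discarded. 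The substitution $s=z_1/\sqrt n$, $|Y'-Y|=z_2/\sqrt n$ then turns the gap into an $\mathrm{Exp}(\sqrt 3)$ variable $z_1$ and $|Y'-Y|$ into a variable $z_2$ on $[0,\sqrt 3]$ with triangular density $(2/\sqrt 3)(1-z_2/\sqrt 3)$; the integrand defining $\beta_{BHH}$ appears as the product of these two densities with $\sqrt{z_1^2+z_2^2}$, yielding $\mathbb{E}\,\ell_j=\sqrt 3\,\beta_{BHH}(1+o(1))$.

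Summing over the $K\sim\sqrt n/\sqrt 3$ strips then produces $\mathbb{E}F(X)=\sqrt n\,\beta_{BHH}(1+o(1))$, which is the claim. Conceptually the argument is transparent: the constant $\beta_{BHH}$ is simply the expected Euclidean norm of the gap vector $(z_1,z_2)$ between two successive points of the limiting Poisson process inside a horizontal strip of height $\sqrt 3$. The only real obstacle is the bookkeeping of the three small error terms --- inter-strip connections, the fringe strip, and the $O(1/\sqrt n)$ correction from the $(1-s)$ factor in the Palm formula --- none of which is conceptually delicate.
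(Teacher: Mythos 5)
Your proposal is correct and follows essentially the same route as the paper's sketch: decompose into horizontal strips of height $\sqrt{3}/\sqrt{n}$, observe that the projected first coordinates form a Poisson process of rate $\sqrt{3n}$ with i.i.d.\ uniform heights, compute the expected consecutive-gap length (whose rescaled form is exactly the $\beta_{BHH}$ integral, with the triangular density of $|Y'-Y|$ supplying the $(1-z_2/\sqrt{3})$ factor), and dismiss the inter-strip links and fringe as $O(1)$. Your treatment is in fact somewhat more careful than the paper's, which leaves the Palm-formula bookkeeping and the boundary corrections implicit.
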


\begin{proof}[Sketch of the proof.] We restrict the Poisson process with intensity $n$ to
the strip $\pi_2(x) \leq \sqrt{3}/\sqrt{n}.$
Then the real random variables
$$ \left\{\pi_1(x) : \pi_2(x) \leq \frac{\sqrt{3}}{\sqrt{n}}\right\}$$
are distributed following a Poisson process with intensity $\sqrt{3n}$ on $[0,1]$. Ordering the points with
respect to increasing first coordinate will give $x-$coordinates whose consecutive differences are exponentially distributed 
$$|\pi_1(x_{i+1}) - \pi_1(x_i)| \sim \sqrt{3n}\exp{\left(-\sqrt{3n}z\right)},$$
while the $y-$coordinates are i.i.d. distributed following the uniform distribution on $[0, \sqrt{3}/\sqrt{n}]$.
Therefore, the expected distance in joining one point to the next is given by
$$  \mathbb{E}\|x_i - x_{i+1}\| =
2n\int_{0}^{\infty}{\int_{0}^{\sqrt{3/n}}{\sqrt{z_1^2+z_2^2}\exp{\left(-\sqrt{3n}z_1\right)}
\left(1-\frac{\sqrt{n}z_2}{\sqrt{3}}\right)dz_2} dz_1}.$$
Substitution allows to rewrite the integral as
$$  \mathbb{E}\|x_i - x_{i+1}\| = \left(2\int_{0}^{\infty}{\int_{0}^{\sqrt{3}}{\sqrt{z_1^2+z_2^2}\exp{\left(-\sqrt{3}z_1\right)}
\left(1-\frac{z_2}{\sqrt{3}}\right)dz_2} dz_1}\right)\frac{1}{\sqrt{n}}.$$
Since we are actually joining all $n$ points, we have to jump from one strip to another 
$\sim \sqrt{n/3}$ times and each time the jump is of order $\sim 1/\sqrt{n}$; this implies that
the contribution coming from these jumps is of order $\mathcal{O}(1)$ and the total expected length is simply given by $n$ times the expected
length of a single jump, which gives
$$  \left(2\int_{0}^{\infty}{\int_{0}^{\sqrt{3}}{\sqrt{z_1^2+z_2^2}\exp{\left(-\sqrt{3}z_1\right)}
\left(1-\frac{z_2}{\sqrt{3}}\right)dz_2} dz_1}\right)\sqrt{n} \sim 0.92116\sqrt{n}.$$
\end{proof}
The underlying 'layer'-method is easily extended to higher dimensions and variable densities, see a paper of Borovkov \cite{bor}.

\subsection{Changing variables.} The argument contains all the necessary ingredients for our improved local construction: following the steps
outlined above, we will study Poisson processes with intensity $n$ in the strip
$$ \left\{(x,y) \in \mathbb{R}^2: 0 \leq x \leq 1 \wedge 0 \leq y \leq \frac{\sqrt{3}}{\sqrt{n}}\right\},$$
which, following the same variable transformation as above, turns into studying local properties of the Poisson process with intensity 1 in the infinite strip $ \left\{(x,y) \in \mathbb{R}^2: 0 \leq y \leq \sqrt{3}\right\}.$ We construct the Poisson distribution indirectly in the following way: since we are interested in the lengths of paths through a local number of points and the strip has a translation symmetry, we may assume the first point to be given by $p_1 = (0, y_1)$, where $y_1$ is uniformly distributed on $[0, \sqrt{3}]$. Adding now iteratively exponentially distributed random variables with parameter $\sqrt{3}$ to the first variables and replacing the second component by independent uniformly distributed random variables in $[0, \sqrt{3}]$ yields the Poisson process with intensity $1$ in the strip.

\subsection{Counting zigzags}
The key observation in our improvement is the following: the Beardwood-Halton-Hammersley method is locally quite
a bad if we encounter what we will informally call a zig-zag structure in the points: 4 consecutive points
with a small difference in the $x-$coordinate but a large difference in the $y-$coordinate. More precisely,
we will say that 4 points $p_1, p_2, p_3, p_4$ (ordered such that their $x-$coordinates increase) form
a zigzag if
$$ \| p_1 - p_3\| + \|p_3 - p_2\| + \|p_2 - p_4\|  \leq \| p_1 - p_2\| + \| p_2 - p_3\| + \| p_3 - p_4\|.$$
Given a zigzag,  it is advantageous to locally change the structure of the path. 
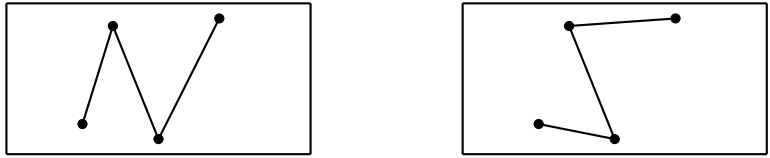
\begin{figure}[h!]
\begin{tikzpicture}[xscale = 2, yscale = 2]
\draw [thick] (0,0) --(2,0);
\draw [thick](0,1) --(2,1);
\draw [thick] (0,0) --(0,1);
\draw [thick] (2,0) --(2,1);
\draw [fill] (0.5,0.2) circle [radius=0.03];
\draw [fill] (0.7,0.85) circle [radius=0.03];
\draw [fill] (1,0.1) circle [radius=0.03];
\draw [fill] (1.4,0.9) circle [radius=0.03];
\draw [thick] (0.5,0.2) --(0.7,0.85) --(1,0.1) --(1.4,0.9);

\draw [thick] (3,0) --(5,0);
\draw [thick](3,1) --(5,1);
\draw [thick] (3,0) --(3,1);
\draw [thick] (5,0) --(5,1);
\draw [fill] (3.5,0.2) circle [radius=0.03];
\draw [fill] (3.7,0.85) circle [radius=0.03];
\draw [fill] (4,0.1) circle [radius=0.03];
\draw [fill] (4.4,0.9) circle [radius=0.03];
\draw [thick] (3.5, 0.2)  -- (4,0.1) --(3.7,0.85) --(4.4,0.9);

\end{tikzpicture}
\caption{Changing a zigzag path into something more effective.}
\end{figure}
We introduce some notation. Let  $x_2, x_3, x_4$ be
i.i.d. variables distributed according to the exponential law $\sqrt{3}e^{-\sqrt{3}z}$
and let  $y_1, y_2, y_3, y_4$ be i.i.d. random variables uniformly distributed in $[0,\sqrt{3}]$.
We define four random points via
\begin{align*}
p_1 &=  (0, y_1) \qquad p_2  =(x_2, y_2) \qquad p_3 = (x_2+x_3, y_3) \qquad p_4 = (x_2 + x_3 + x_4, y_4).
\end{align*}
We know from the previous section that for all $1 \leq i \leq 3$
$$ \mathbb{E}\| p_i - p_{i+1}\|  =  \left(2\int_{0}^{\infty}{\int_{0}^{\sqrt{3}}{\sqrt{z_1^2+z_2^2}\exp{\left(-\sqrt{3}z_1\right)}
\left(1-\frac{z_2}{\sqrt{3}}\right)dz_2} dz_1}\right) \sim 0.92\dots$$

Given these four points, we introduce a stochastic event $(A)$.
\begin{align*}
\| p_1 - p_3\| + \|p_3 - p_2\| + \|p_2 - p_4\|  &\leq 
\| p_1 - p_2\| + \|p_2 - p_3\| + \|p_3 - p_4\| \qquad (A) 
\end{align*}
Furthermore, we will introduce the respective (random) difference
\begin{align*}
X &= (\| p_1 - p_2\| + \|p_2 - p_3\| + \|p_3 - p_4\|) - (\| p_1 - p_3\| + \|p_3 - p_2\| + \|p_2 - p_4\|)  
\end{align*}

\begin{lemma} We have
$$ \mathbb{E}\left(X \big| A \right) \mathbb{P}\left(A\right) \geq \frac{9}{4}10^{-6}.$$
\end{lemma}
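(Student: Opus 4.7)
The plan is to observe that the event $A$ is exactly $\{X \geq 0\}$, so that
$$\mathbb{E}(X \mid A)\,\mathbb{P}(A) \;=\; \mathbb{E}(X \cdot \mathbf{1}_A) \;=\; \mathbb{E}(X^+).$$
Therefore it suffices to bound the expected positive part from below, and the natural way to do this is to restrict to a sub-event $B \subset A$ on which the configuration is forced to be a pronounced zigzag and on which $X$ is bounded below by an explicit constant.

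I would fix two small parameters $a, h > 0$ and consider the event
$$B \;=\; \{x_2 \leq a,\; x_3 \leq a,\; x_4 \leq a\} \;\cap\; \{y_1 \leq h,\; y_3 \leq h\} \;\cap\; \{y_2 \geq \sqrt{3}-h,\; y_4 \geq \sqrt{3}-h\}.$$
On $B$ the four points occupy an $x$-interval of length at most $3a$ while the $y$-coordinates alternate between the bottom band $[0,h]$ and the top band $[\sqrt{3}-h,\sqrt{3}]$, so the configuration is as zigzag-like as possible. A direct application of the triangle inequality shows that each of the three original edges has length at least $\sqrt{3}-2h$, while the three rerouted edges satisfy $\|p_1-p_3\|,\|p_2-p_4\|\leq \sqrt{4a^2+h^2}$ and $\|p_3-p_2\|\leq \sqrt{a^2+3}$. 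This produces an explicit lower bound
$$X \;\geq\; 3(\sqrt{3}-2h) \;-\; 2\sqrt{4a^2+h^2} \;-\; \sqrt{a^2+3} \;=:\; \varepsilon(a,h)$$
which tends to $2\sqrt{3}$ as $a,h\to 0$ and is strictly positive for $a,h$ small.

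Since the seven coordinates $x_2,x_3,x_4,y_1,y_2,y_3,y_4$ are independent, the probability of $B$ factors as
$$\mathbb{P}(B) \;=\; \bigl(1-e^{-\sqrt{3}\,a}\bigr)^3 \left(\frac{h}{\sqrt{3}}\right)^4.$$
As $\varepsilon(a,h) > 0$ on $B$ forces $B\subset A$, we conclude
$$\mathbb{E}(X \mid A)\,\mathbb{P}(A) \;\geq\; \varepsilon(a,h)\,\mathbb{P}(B),$$
and the proof reduces to choosing numerical values of $a$ and $h$ which make the right-hand side at least $\tfrac{9}{4}\cdot 10^{-6}$. A modest choice such as $a=h=\tfrac{1}{3}$ should leave considerable room, since the right-hand side is then of order $10^{-5}$.

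The only real obstacle is the trade-off between the two factors: shrinking $a,h$ makes the pointwise lower bound $\varepsilon(a,h)$ larger but drives $\mathbb{P}(B)$ sharply toward zero, and one wants to pick a box that makes both the savings and the probability cleanly explicit. Any attempt to capture the true value $\varepsilon_0 \sim 0.0148$ rather than the crude bound $\tfrac{9}{4}\cdot 10^{-6}$ would require replacing the triangle-inequality estimates by the exact $7$-dimensional integral representing $\mathbb{E}(X^+)$ and evaluating it numerically, but that level of precision is not needed here.
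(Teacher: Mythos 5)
Your strategy is exactly the paper's: restrict to a box event $B$ on which the configuration is a forced zigzag, bound $X$ from below pointwise on $B$, and multiply by $\mathbb{P}(B)$ (the paper phrases the first step as $\mathbb{E}(X|A)\mathbb{P}(A) \geq \mathbb{E}(X|B)\mathbb{P}(B)$ rather than via $\mathbb{E}(X^+)$, but it is the same inequality). Your geometric estimates are all correct: on $B$ each original edge has length at least $\sqrt{3}-2h$, and the rerouted edges are bounded by $\sqrt{4a^2+h^2}$, $\sqrt{4a^2+h^2}$, $\sqrt{a^2+3}$, giving $X \geq \varepsilon(a,h) = 3(\sqrt{3}-2h)-2\sqrt{4a^2+h^2}-\sqrt{a^2+3}$, and $\mathbb{P}(B)=(1-e^{-\sqrt{3}a})^3(h/\sqrt{3})^4$ is right.

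The one genuine problem is the final numerical step, which is the only part of the argument that actually needs to be checked and which you left unverified: your proposed choice $a=h=\tfrac{1}{3}$ does not work. Indeed,
$$\varepsilon\left(\tfrac{1}{3},\tfrac{1}{3}\right) = 3\sqrt{3}-2-\tfrac{2}{3}\sqrt{5}-\tfrac{1}{3}\sqrt{28} \approx 3.196 - 1.491 - 1.764 \approx -0.058 < 0,$$
so with these parameters you cannot conclude $B\subset A$, let alone a positive gain; the claim that this choice "leaves considerable room" is false. The trade-off you yourself flag is sharper than you estimated: the box must be shrunk. The paper's choice corresponds to $a=h=\sqrt{3}/9\approx 0.192$, for which your own bound gives $\varepsilon \approx 1.44 > \tfrac{3}{4}$ and $\mathbb{P}(B) = (1-e^{-1/3})^3\cdot 81^{-2} \approx 3.47\times 10^{-6}$, so the product exceeds $\tfrac{9}{4}\cdot 10^{-6}$ comfortably. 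With that (or any similarly small) parameter choice substituted, your argument is complete and coincides with the paper's.
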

\begin{proof} Since we are only trying to show a positive lower bound, rough estimates
suffice. We study the event $B$ defined as
$$ \left(x_2 \leq \frac{\sqrt{3}}{9}\right) \wedge \left(x_3 \leq  \frac{\sqrt{3}}{9}\right) \wedge 
\left(x_4  \leq  \frac{\sqrt{3}}{9}\right) \wedge \left(\min(y_1, y_3) \geq \frac{8\sqrt{3}}{9}\right)
\wedge \left(\max(y_2, y_4) \leq \frac{\sqrt{3}}{9}\right).$$
\begin{figure}
\begin{tikzpicture}[scale = 3]
\draw [thick] (0,0) --(0.7,0);
\draw [thick] (0,0) --(0,1);
\draw [thick] (0,1/9) --(0.7,1/9);
\draw [thick] (0,2/9) --(0.7,2/9);
\draw [thick] (0,3/9) --(0.7,3/9);
\draw [thick] (0,4/9) --(0.7,4/9);
\draw [thick] (0,5/9) --(0.7,5/9);
\draw [thick] (0,6/9) --(0.7,6/9);
\draw [thick] (0,7/9) --(0.7,7/9);
\draw [thick] (0,8/9) --(0.7,8/9);
\draw [thick] (0,9/9) --(0.7,9/9);
\draw [thick] (1/9,0) --(1/9,1);
\draw [thick] (2/9,0) --(2/9,1);
\draw [thick] (3/9,0) --(3/9,1);
\draw [thick] (4/9,0) --(4/9,1);
\draw [fill] (0.05,0.97) circle [radius=0.02];
\draw [fill] (0.16,0.05) circle [radius=0.02];
\draw [fill] (0.27,0.92) circle [radius=0.02];
\draw [fill] (0.37,0.06) circle [radius=0.02];
\end{tikzpicture}
\caption{An instance of the event $B$.}
\end{figure}
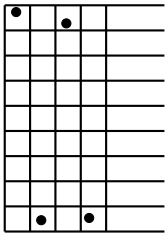
These variables are independent and all distributions are explicitely given: thus, for $2 \leq i \leq 4$, we have
$$\mathbb{P}\left(x_i \leq  \frac{\sqrt{3}}{9}\right) = \int_{0}^{\frac{\sqrt{3}}{9}}{\sqrt{3}e^{-\sqrt{3}z}dz} = 1-\frac{1}{e^{\frac{1}{3}}},$$
while $\mathbb{P}(\min(y_1, y_3) \geq \frac{8\sqrt{3}}{9}) = \mathbb{P} (\max(y_2, y_4) \leq \frac{\sqrt{3}}{9}) = 1/81$.
$$\mathbb{P}(B) = \left(1-\frac{1}{e^{\frac{1}{3}}}\right)^3\left(\frac{1}{81}\right)^2
\geq 3 \cdot 10^{-6}.$$
At the same time, a simple computation yields that in the event $B$, we always have
$$ X \geq \frac{3}{4}.$$
This implies that the event $B$ is a subset of the event $A$ and, trivially,
Therefore 
$$ \mathbb{E}\left(X \big| A \right) \mathbb{P}\left(A\right) \geq
 \mathbb{E}\left(X \big| B \right) \mathbb{P}\left(B\right)  \geq \frac{9}{4}10^{-6}.$$
\end{proof}

\begin{proof}[Proof of the upper bound.]  We follow the original idea of Beardwood, Halton \& Hammersley
and partition the unit square into strips: since we are dealing with a Poisson process, the behavior within
each strip is independent of that in all other strips; focusing on one strip, we are dealing with a Poisson
process of intensity $n$. For any set of random points arising from the Poisson process, we order them
 with increasing $x-$coordinate
$$\pi_1(x_1) \leq \pi_1(x_2) \leq \dots \leq \pi_1(x_k), $$
and consider the $4-$tuples $(x_1, x_2, x_3, x_4)$, $(x_5, x_6, x_7, x_8)$, and so on (with possibly
up to 3 points left at the end of each strip). Whether or not any of these $4-$tuples contains a zigzag 
structure is an independent event: the computations in the previous section then imply that with probability
at least $3 \cdot 10^{-6}$ a zigzag yielding a gain of at least $3/(4\sqrt{n})$ is present. There are
are $n/4 - O(\sqrt{n})$ $4-$tuples to consider implying the gain in length to be at of order
$(n/4-O(\sqrt{n}))(3/(4\sqrt{n}))(3 \cdot 10^{-6})$ and thus
$$ \beta \leq \beta_{BHH} -  \frac{9}{16}10^{-6}.$$
\end{proof}

\textbf{Remark.} The gain in length was achieved by looking at $n/4-O(\sqrt{n})$ independent events: usual
arguments would allow us to conclude that the predicted gain in length is actually tightly concentrated around
its mean. This, however, is not necessary for our sort of argument: we already know that $\beta$ describes the
limiting behavior almost surely: the expected length of any construction of deterministic paths is then necessarily
an upper bound on $\beta$.\\

\textbf{Remark.} These problems exhibit a concentration of measure phenomenon implying the stability of 
Monte-Carlo estimates, which will then usually imply much stronger results. For comparison, we did ten samples of a million random points each, which suggests
$$ \mathbb{P}\left(A\right) \sim 0.1418, \quad  \mathbb{E}\left(X \big| A \right) \sim 0.4187
\qquad \mbox{and thus} \qquad \mathbb{E}\left(X \big| A \right) \mathbb{P}\left(A\right) \geq 0.059.$$
with a standard deviation of 0.0003 and 0.001, respectively. This would imply that indeed
$$ \beta \leq 0.90632.$$

\subsection{Numerical estimates.} Our result was aimed towards the clearest presentation of the idea. Improvements
of the idea are rather obvious, however, they require somewhat accurate bounds for certain finite-dimensional integrals.
One particular generalization is as follows: one could study not merely zigzags but all 24 possible paths through six 
points leaving the first and the last point invariant; let us consider all 24 permutations over the symbols $\left\{2,3,4,5\right\}$
and denote the existence of an improved path as the stochastic event $(C)$
$$ \inf_{\pi \in S_4(\left\{2,3,4,5\right\})}{\|p_1 - p_{\pi(2)}\| + \sum_{i=2}^{4}{\|p_{\pi(i+1)} - p_{\pi(i)}\|}
+\|p_{6}-p_{\pi(5)}\|} < \sum_{i=1}^{5}{\|p_{i+1}-p_{i}\|} \qquad \qquad (C)$$
and the respective improvement by
$$ Z =  \sum_{i=1}^{5}{\|p_{i+1}-p_{i}\|} -  \inf_{\pi \in S_4(\left\{2,3,4,5\right\})}{\|p_1 - p_{\pi(2)}\| + \sum_{i=2}^{4}{\|p_{\pi(i+1)} - p_{\pi(i)}\|}}-\|p_{6}-p_{\pi(5)}\|.$$
Monte-Carlo methods (10 samples of 50000 sets of points each) suggest that
$$ \mathbb{P}\left(C\right) \sim 0.3721  \qquad \mbox{and} \qquad \mathbb{E}\left(Z \big| C \right) \sim 0.4990$$
with a standard deviation of 0.02 and 0.004, respectively. These values would suggest $\beta \leq 0.8902$. 
As already hinted at in the introduction, there is a natural limit to these improvements: we study paths through 
random points with an additional restriction on their movement in one of the two dimensions, which corresponds
to a different functional and this difference will be a great hindrance to further major improvements.

\section{Proof of the lower Bound}

\subsection{The original argument.} Proving an upper bound can (and has) been done by constructing an explicit path.
Proving a lower bound has to pursue an entirely different strategy since we have very little idea what an
optimal path could look like: we already know, however, that it is sufficient to prove lower bounds on the
expected length of the traveling salesman path through points of a Poisson process with intensity
$n$ in $[0,1]^2$. The only real basic information about paths at our disposal is that for every point there are two
points to which that particular point is connected: suppose now that for every point, these
two points are also the two closest points. \\

The second remark is that we may assume that the Poisson process is actually distributed with the
intensity $n$ on all of $\mathbb{R}^2$: adding more points can only decrease the expected distance
and allows us to disregard the behavior of the process close to the boundary of $[0,1]^2$. The following
 Lemma can be found in many basic books on probability theory.
\begin{lemma} Let $P_n$ be a Poisson process on $\mathbb{R}^2$ with intensity $n$. Then for
any fixed point $p \in \mathbb{R}^2$, the probability distribution of the distance between $p$ and the
nearest point in $\mathcal{P}_n$ is given by
$$ f(r) = 2\pi n r e^{-\pi n r^2}$$
and the distance to the second-nearest neighbour is given by
$$ g(r) = 2\pi^2 n^2 r^3 e^{-\pi n r^2}.$$
\end{lemma}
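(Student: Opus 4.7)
The plan is to exploit the defining property of a Poisson process of intensity $n$: the number of points of $\mathcal{P}_n$ falling in any measurable region $A \subset \mathbb{R}^2$ is Poisson distributed with parameter $n|A|$, and the counts on disjoint regions are independent. By the translation invariance of the process I may assume $p = 0$, and then the relevant auxiliary quantity is the random variable $N(r)$ counting the number of points of $\mathcal{P}_n$ in the closed disk $B(p, r)$, which is Poisson with mean $\pi n r^2$.

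For the nearest neighbor $D_1$, I would observe that the event $\{D_1 > r\}$ coincides exactly with the event $\{N(r) = 0\}$, so that
$$\mathbb{P}(D_1 > r) = e^{-\pi n r^2}.$$
Differentiating $1 - e^{-\pi n r^2}$ yields the claimed density $f(r) = 2\pi n r e^{-\pi n r^2}$. For the second-nearest neighbor $D_2$, the analogous observation is that $\{D_2 > r\} = \{N(r) \leq 1\}$, so that the survival function reads
$$\mathbb{P}(D_2 > r) = e^{-\pi n r^2}\bigl(1 + \pi n r^2\bigr).$$
A single differentiation, together with the cancellation between the two terms produced by the product rule, leaves only the $\pi n r^2$ factor and gives $g(r) = 2\pi^2 n^2 r^3 e^{-\pi n r^2}$.

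There is no serious obstacle here: the entire proof is an immediate consequence of the definition of a Poisson point process and two one-line derivatives. The only point worth confirming is that both expressions are honest probability densities, which follows from the substitution $u = \pi n r^2$: it reduces $\int_0^\infty f(r)\,dr$ and $\int_0^\infty g(r)\,dr$ to $\int_0^\infty e^{-u}\,du$ and $\int_0^\infty u e^{-u}\,du$, respectively, both of which equal $1$.
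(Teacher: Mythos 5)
Your proof is correct and follows essentially the same route as the paper: both rest on the fact that the number of points of $\mathcal{P}_n$ in a disk of radius $r$ is Poisson with mean $\pi n r^2$, the paper phrasing the computation via infinitesimal annuli while you differentiate the survival functions $\mathbb{P}(D_1>r)=e^{-\pi n r^2}$ and $\mathbb{P}(D_2>r)=e^{-\pi n r^2}(1+\pi n r^2)$. Your version is in fact the cleaner of the two, since it avoids the informal limit bookkeeping in the paper's argument.
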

\begin{proof} We compute the probability of the closest point point lying at distance $(r, r+\varepsilon)$. This
is precisely the case if there is \textit{no} point in a disk of radius $r$ around $p$ but at least 1 point in
$(r, r+\varepsilon)$. It follows from the definition of the Poisson process that the probability of there being no point
in the disk is given by $e^{-n \pi r^2}$ and thus
$$ f(r) = \lim_{\varepsilon \rightarrow 0}{\frac{e^{-n \pi (r+\varepsilon)^2} - e^{-n \pi r^2}}{\varepsilon}} = 2r\pi n e^{-n \pi r^2}.$$
We compute the other expression in the same way: we require that there is precisely one point with
distance at most $r$ and another point at distance $(r, r+\varepsilon)$. The probability of being precisely
one point at distance at most $r$ is given by $ r^2\pi n e^{-r^2 \pi n}$ while the area of the annulus
is simply $((r+\varepsilon)^2-r^2)\pi$ and the probability of one point being in there is 
 $((r+\varepsilon)^2-r^2)\pi n e^{-((r+\varepsilon)^2-r^2)\pi n}.$ Altogether, we have
$$ g(r) =  (r^2\pi n e^{-r^2 \pi n}) \lim_{\varepsilon \rightarrow 0^+}{\frac{((r+\varepsilon)^2-r^2)\pi n e^{-((r+\varepsilon)^2-r^2)\pi n}}{\varepsilon}} = 2\pi^2 n^2 r^3 e^{-\pi n r^2}.$$

\end{proof}

Standard calculations 
give that the distance $r$ to the nearest point has expectation
$$ \int_0^{\infty}{rf(r)dr} = \frac{1}{2\sqrt{n}}$$
while the distance to the next-to-nearest point has expectation
$$ \int_0^{\infty}{rg(r)dr} = \frac{3}{4\sqrt{n}}.$$
Given a traveling salesman path, every point is connected to two other points -- in the worst case, these are the nearest and the next-to-nearest point in all cases, yielding a lower bound of
$$ \beta \geq \left(\frac{1}{2}+\frac{3}{4}\right)\frac{1}{2} = 0.625,$$
which is the original result of Beardwood, Halton \& Hammersley.

\subsection{An improvement.} The previous argument assumed that it is always the
worst case that occurs: every point is connected to its two closest neighbors. This, however,
is not possible if we have the following constellation of points: a point $a$ with closest 
point $b$ at distance $r_1$ and its second-closest point $c$ at distance $r_2 > r_1$ 
and third-closest point $d$ at distance $r_3 > r_1 + 2r_2$. Our proof rests on an
analysis of this situation.
\begin{lemma} Let $P_n$ be a Poisson process on $\mathbb{R}^2$ with intensity $n$. Then for
any fixed point $p \in \mathbb{R}^2$, the probability distribution of the distance between $p$ and the
closest, second closest and third closest point is given by
$$ h(r_1, r_2, r_3) = \begin{cases}
e^{-n \pi r_3^2}(2n\pi)^3r_1 r_2 r_3 \qquad &\mbox{if~}r_1 < 
r_2 < r_3 \\
0 \qquad &\mbox{otherwise.}
\end{cases}$$
\end{lemma}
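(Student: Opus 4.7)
The plan is to imitate the one-variable computation of the previous lemma, but now tracking three nested events simultaneously. Fix $0 < r_1 < r_2 < r_3$ and small $\varepsilon_1, \varepsilon_2, \varepsilon_3 > 0$; we want the probability that the nearest point of $\mathcal{P}_n$ to $p$ lies in the annulus $A_1 = \{r_1 < \|x-p\| < r_1+\varepsilon_1\}$, the second-nearest in $A_2 = \{r_2 < \|x-p\| < r_2+\varepsilon_2\}$, and the third-nearest in $A_3 = \{r_3 < \|x-p\| < r_3+\varepsilon_3\}$. Since the $\varepsilon_i$ will be sent to zero, we may assume the three annuli are pairwise disjoint.

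The next step is to translate this into a conjunction of Poisson events on disjoint regions. The event above coincides with the joint occurrence of: (i) exactly one point in $A_1$, (ii) exactly one point in $A_2$, (iii) exactly one point in $A_3$, (iv) no points in the open disk of radius $r_1$ about $p$, (v) no points in the annulus $\{r_1 + \varepsilon_1 < \|x-p\| < r_2\}$, and (vi) no points in the annulus $\{r_2 + \varepsilon_2 < \|x-p\| < r_3\}$. The six regions are pairwise disjoint, so by the defining independence property of a Poisson process, the probability factors into the product of the six individual probabilities.

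Each factor is now elementary. The emptiness probabilities combine to $\exp\bigl(-n\pi(r_3^2 - r_1^2 - \varepsilon_1(2r_1+\varepsilon_1) - \varepsilon_2(2r_2+\varepsilon_2))\bigr) \cdot e^{-n\pi r_1^2}$, while each occupancy probability is $n|A_i| e^{-n|A_i|}$ with $|A_i| = \pi((r_i+\varepsilon_i)^2 - r_i^2)$. Dividing by $\varepsilon_1\varepsilon_2\varepsilon_3$ and letting all $\varepsilon_i \to 0^+$, the three emptiness corrections collapse to $e^{-n\pi r_3^2}$, each occupancy factor contributes $2\pi n r_i$, and we obtain
\[
h(r_1,r_2,r_3) = (2\pi n)^3 r_1 r_2 r_3 \, e^{-n\pi r_3^2}
\]
on the region $r_1 < r_2 < r_3$, with the density vanishing otherwise since the labeling ``nearest, second-nearest, third-nearest'' forces the strict ordering.

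There is no real obstacle here; the only thing to be careful about is the bookkeeping in step two so that the six regions are indeed disjoint and jointly cover the information needed to identify the three nearest neighbors (in particular, to avoid double counting once the ordering constraint is imposed). The computation is otherwise a direct generalization of the argument used for $f(r)$ and $g(r)$.
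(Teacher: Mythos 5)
Your argument is correct and is essentially the same as the paper's: you condition on exactly one point in each of three thin annuli, no points in the intervening regions, factor by independence of the Poisson process on disjoint sets, and take the limit of the difference quotient. Your bookkeeping with separate $\varepsilon_i$ and explicitly disjoint regions is in fact slightly cleaner than the paper's version, but the computation is identical.
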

\begin{proof} As before, we study the probability of precisely one point at distance $(r_1, r_1+\varepsilon)$ (event A), precisely one point in $(r_2, r_2+\varepsilon)$ (event B) and precisely one point in $(r_3, r_3+\varepsilon)$ (event C) and no points in between (event D). The probabilities for these events 
including their expansion up to first order in $\varepsilon$ are
\begin{align*}
\mathbb{P}(A) &= ((r_1+\varepsilon)^2-r_1^2)n\pi e^{(-(r_1+\varepsilon)^2+r_1^2)n\pi}
=  2n\pi\varepsilon r_1 + O(\varepsilon^2)\\
\mathbb{P}(B) &= ((r_2+\varepsilon)^2-r_2^2)n\pi e^{(-(r_2+\varepsilon)^2+r_2^2)n\pi} 
=  2n\pi\varepsilon r_2 + O(\varepsilon^2)\\
\mathbb{P}(C) &= ((r_3+\varepsilon)^2-r_3^2)n\pi e^{(-(r_3+\varepsilon)^2+r_3^2)n\pi} 
=  2n\pi\varepsilon r_3 + O(\varepsilon^2)\\
\mathbb{P}(D) &= e^{-r_1^2\pi n}e^{(-r_2^2+(r_1+\varepsilon)^2)\pi n}e^{(-r_3^2+(r_2+\varepsilon)^2)\pi n} = e^{-n\pi r_3^2} + P(\varepsilon).
\end{align*}
This immediately implies the statement.
\end{proof}

\begin{proof}[Proof of the lower bound.] We start by showing that both the nearest as well as the next-to-nearest
point of any element in $\left\{a,b,c\right\}$ also lies in the set. Let $x$ be some other point with $x \notin \left\{a,b,c\right\}$. Then
$$ \| b - x\| \geq \|a - x\| - r_1 \geq r_3 - r_1 > 2r_2 \geq r_1 + r_2 \geq \|b-c\|$$
and therefore the second closest point from $b$ is $a$ or $c$. By the same token
$$ \| c - x\| \geq \|a - x\| - r_2 \geq r_3 - r_2 \geq r_1 + r_2 \geq \|b-c\|$$
and therefore the second closest point from $c$ is $a$ or $b$. Cases of equality have
probability 0 and can be ignored.

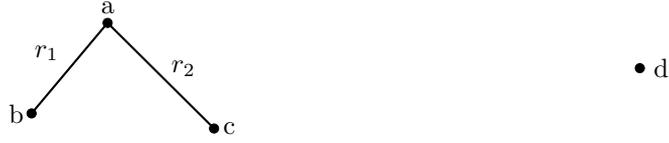
\begin{figure}[h!]
\begin{tikzpicture}[xscale = 2, yscale = 2]
\draw [fill] (0.5,0.2) circle [radius=0.03];
\draw [fill] (1,0.8) circle [radius=0.03];
\draw [fill] (1.7,0.1) circle [radius=0.03];
\draw [fill] (4.5,0.5) circle [radius=0.03];
\draw [thick] (0.5,0.2) --(1,0.8) -- (1.7,0.1);

\draw (1,0.9) node{a};
\draw (0.4,0.2) node{b};
\draw (1.8,0.1) node{c};
\draw (0.6,0.6) node{$r_1$};
\draw (1.5,0.5) node{$r_2$};
\draw (4.64,0.5) node{d};
\end{tikzpicture}
\caption{$a$ and the three closest points of $a$.}
\end{figure}
If we simply connect every point to its two closest neighbours, we end up with a triangle where
every point is connected to the two other points but no other point except those. This is clearly
not possible for a traveling salesman path. Let us first compute the frequency of such an event.
Using the Lemma, the probability of all of these distance relations being true for a fixed point $a$ is 
$$ \int_{0}^{\infty}{\int_{r_1}^{\infty}{\int_{r_1+2r_2}^{\infty}{e^{-n \pi r_3^2}(2n\pi)^3r_1 r_2 r_3
dr_3}dr_2}dr_1} = \frac{7}{324}.$$
There is a lack of independence: if it is true for $a$, it is likely to be true for
$b$ and $c$ as well -- thus, we have only the trivial bound 
$$ \frac{1}{3}\frac{7}{324}n = \frac{7n}{972}$$
on the number of triples of points with this property. However, if the case occurs, then the
algorithm connecting every point to its two nearest neighbours has an expected
length which can be bounded from above by
$$ r_1 + r_2 +2||a-c|| \leq 3(r_1 + r_2).$$
where the distance $\|a-c\|$ has to be counted twice because the algorithm cannot
'see' that it has created a triangle and counts the distance twice.
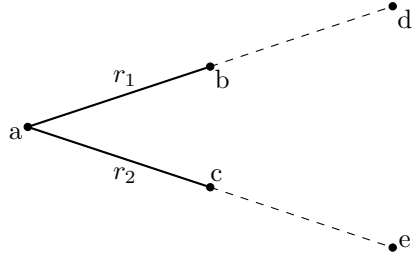
\begin{figure}[h!]
\begin{tikzpicture}[xscale = 0.8, yscale = 0.8]
\draw [fill] (0,0) circle [radius=0.06];
\draw (-0.2,-0.1) node{a};

\draw [fill] (3,1) circle [radius=0.06];
\draw (3.2,0.8) node{b};
\draw [thick] (0,0) -- (3,1);
\draw (1.6,0.8) node{$r_1$};

\draw [fill] (3,-1) circle [radius=0.06];
\draw (3.1,-0.8) node{c};
\draw [thick] (0,0) -- (3,-1);
\draw (1.6,-0.8) node{$r_2$};

\draw [fill] (6,2) circle [radius=0.06];
\draw (6.2,1.8) node{d};
\draw [dashed] (3,1) -- (6,2);

\draw [fill] (6,-2) circle [radius=0.06];
\draw (6.2,-1.9) node{e};
\draw [dashed] (3,-1) -- (6,-2);
\end{tikzpicture}
\caption{The best of the worst case.}
\end{figure}
In the case of three points isolated from the rest, there is one special case which is the
easiest to connect to the remaining points: this is is when $b$ can be connected to a
point $d$ having distance $r_3$ from $a$ and $c$ can be connected to a different point $e$ also
at distance $r_3$ from $a$ and, additionally, $b$ lies on the line $\overline{ad}$ and $c$ 
lies on $\overline{ae}$. In this case, the
required length is 
$$\|d-b\| + \|b-a\| + \|a-c\| + \|c-e\| \geq (r_3-r_1) + r_1 + r_2 + (r_3-r_2) = 2r_3.$$
This implies that whenever we are in this particular configuration, the actual path has 
to be at least a length $2r_3 - 3(r_1 + r_2)$ longer than what the greedy algorithm suggests. 
Note that
$$ 2r_3 - 3(r_1 + r_2) \geq r_2-r_1 \geq 0$$
and that we always gain something at this point.
In expectation, this is an average length of
$$ \frac{324}{7}\int_{0}^{\infty}{\int_{r_1}^{\infty}{\int_{r_1+2r_2}^{\infty}{(2r_3-3r_1-3r_2)e^{-n \pi r_3^2}(2n\pi)^3r_1 r_2 r_3
dr_3}dr_2}dr_1} = \frac{57}{112}\frac{1}{\sqrt{n}}.$$
Altogether, this gives the lower bound
$$ \beta \geq \left(\frac{1}{2}+\frac{3}{4}\right)\frac{1}{2} + \frac{7}{972}\frac{57}{112} = \frac{5}{8} + \frac{19}{5184}.$$
\end{proof}

\textbf{Acknowledgments.} I am grateful to Steven Finch for comments on the history
of the problem and indebted to J. Michael Steele for encouraging me to write this paper
and a series of very valuable discussions. The author was supported by SFB 1060 of the DFG.

\end{document}